\newcommand{\beq}[1]{ \begin{equation}\label{#1} }
\newcommand{\eeq}{\end{equation}}
\numberwithin{equation}{section}
\DeclareMathOperator*{\esssup}{{\rm ess}\,sup}
\newtheorem{theorem}{Theorem}
\newtheorem{lemma}{Lemma}
\newtheorem{corollary}{Corollary}
\newtheorem{definition}{Definition}
\newtheorem{example}{Example}
\newtheorem{proposition}{Proposition}
\journal{Applied Mathematics Letters}
\begin{document}

\begin{frontmatter}

\title{Solution estimates and stability tests for linear neutral differential equations}

\author[label1]{Leonid Berezansky}
\author[label2]{Elena Braverman}
\address[label1]{Dept. of Math.,
Ben-Gurion University of the Negev,
Beer-Sheva 84105, Israel}
\address[label2]{Dept. of Math. and Stats., University of
Calgary,2500 University Drive N.W., Calgary, AB, Canada T2N 1N4; e-mail
maelena@ucalgary.ca, phone 1-(403)-220-3956, fax 1-(403)--282-5150 (corresponding author)}

\begin{abstract}
Explicit exponential stability tests are obtained for the scalar neutral differential equation
$$
\dot{x}(t)-a(t)\dot{x}(g(t))=-\sum_{k=1}^m b_k(t)x(h_k(t)),
$$
together with exponential estimates for its solutions.

Estimates for solutions of a non-homogeneous neutral equation are also obtained, they are
valid on every finite segment, 
thus describing both asymptotic and transient behavior. For neutral differential equations, 
exponential estimates are obtained here for the first time. 
Both the coefficients and the delays are assumed to be measurable, not necessarily continuous functions. 
\end{abstract}


\begin{keyword}
linear neutral differential equations, exponential stability tests,
explicit solution estimates, variable delays and coefficients

\noindent
{\bf AMS subject classification:} 
34K20, 34K25, 34K06
\end{keyword}

\end{frontmatter}

\section{Introduction}

In  recent papers \cite{BB1,BB2} we obtained new exponential stability conditions
for a scalar linear neutral differential equation 
\begin{equation}\label{4.5}
\dot{x}(t)-a(t)\dot{x}(g(t))=-b(t)x(h(t)), ~~t \geq t_0.
\end{equation}
Here the functions $a, b, g, h$ are assumed to be Lebesgue measurable, $b$ is essentially bounded
on $[t_0,\infty)$, $a$ satisfies
\begin{equation}
\label{a_less_1}
|a(t)|\leq A_0<1, ~~ t\geq t_0.
\end{equation}
The condition on the delay in the neutral term
\begin{equation}
\label{del_neut}
mes~ E=0\Longrightarrow mes~ g^{-1}(E)=0,
\end{equation}
where $mes~E$ is  the Lebesgue
measure of the set $E$, guarantees that $u(g(t))$ is properly defined and is  Lebesgue measurable for any measurable $u$.
The delays in both terms of (\ref{4.5}) are variable but bounded: for some $\tau>0$ and $\sigma>0$, 
$0\leq t-g(t)\leq \sigma$, $0 \leq t-h(t) \leq \tau$ for $t \geq t_0$.
All the functions are considered in the space $L_{\infty}$ of Lebesgue measurable essentially bounded functions with the essential supremum norm
$\| \cdot \|_J$ on a certain segment $J \subset [t_0- \max\{\tau,\sigma\}, \infty)$.

\begin{proposition}\label{proposition5.1} \cite{BB1}
If at least one of the following conditions holds
\begin{equation}\label{5.1}
0<b_0\leq b(t),~\|b\|_{[t_0,\infty)}\tau \leq \frac{1}{e}\,,~
\|a\|_{[t_0,\infty)}+\|b\|_{[t_0,\infty)}\left\|\frac{a}{b}\right\|_{[t_0,\infty)}<1;
\end{equation}
 \begin{equation}\label{5.2}
\left\|\frac{a}{b_1}\right\|_{[t_0,\infty)}\frac{\|b\|_{[t_0,\infty)}}{1-\|a\|_{[t_0,\infty)}}
+\left\|\frac{b-b_1}{b_1}\right\|_{[t_0,\infty)} < 1, \mbox{ where ~} b_1(t):= \min \left\{ b(t),\frac{1}{\tau e}\right\};
\end{equation}
\begin{equation}\label{5.3}
 \displaystyle \| b\|_{[t_0,\infty)} \left(  \left\|\frac{a}{b}\right\|_{[t_0,\infty)}
 + \left\| \left( t-h(t)- \frac{1}{\|b\|_{[t_0,\infty)}e} \right)^+ \right\|_{[t_0,\infty)} \right) < 1- \| a \|_{[t_0,\infty)},
\end{equation}
where $u^+=\max\{u,0\}$, equation (\ref{4.5}) is uniformly exponentially stable.
\end{proposition}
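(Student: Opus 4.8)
The plan is to deduce all three tests from a Bohl--Perron type principle for neutral equations: it suffices to show that for every $f\in L_\infty$ the solution $x$ of the forced equation $\dot x(t)-a(t)\dot x(g(t))=-b(t)x(h(t))+f(t)$ with zero initial conditions has $\dot x\in L_\infty$ (whence also $x\in L_\infty$). Because the neutral term carries the \emph{derivative} $\dot x(g(t))$, the natural working space is one controlling $\dot x$, and this is what distinguishes the argument from the non-neutral delay case. Throughout, $\|\cdot\|$ denotes the $L_\infty$-norm over the segment under consideration. I would combine the principle above with the positivity and the integral estimates of the Cauchy function $X(t,s)$ of the auxiliary non-neutral equation $\dot x(t)+b(t)x(h(t))=0$, which under $\|b\|\tau\le 1/e$ is exponentially stable with $X(t,s)\ge 0$ and satisfies the key estimate $\int_{t_0}^{t}X(t,s)b(s)\,ds\le 1$ (by comparison with the constant solution of $\dot x+b\,x(h)=b$).

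For \rf{5.1}, rewrite the forced equation as $\dot x(t)+b(t)x(h(t))=a(t)\dot x(g(t))+f(t)$ and represent its solution through the Cauchy function,
\[
x(t)=\int_{t_0}^{t}X(t,s)\bigl[a(s)\dot x(g(s))+f(s)\bigr]\,ds .
\]
Writing $|a(s)|\le\|a/b\|\,b(s)$ and using $\int X b\le 1$ together with the lower bound $b\ge b_0$ (which gives $\int_{t_0}^{t}X(t,s)\,ds\le 1/b_0$), I bound $|x(h(t))|\le\|a/b\|\,\|\dot x\|+b_0^{-1}\|f\|$. Substituting this into $\dot x(t)=a(t)\dot x(g(t))-b(t)x(h(t))+f(t)$ yields
\[
\bigl(1-\|a\|-\|b\|\,\|a/b\|\bigr)\|\dot x\|\le\bigl(1+\|b\|/b_0\bigr)\|f\|,
\]
so $\dot x$ is bounded precisely when $\|a\|+\|b\|\,\|a/b\|<1$, which is \rf{5.1}.

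Conditions \rf{5.2} and \rf{5.3} are proved by treating the failure of $\|b\|\tau\le 1/e$ as a perturbation. For \rf{5.2} I replace $b$ by $b_1=\min\{b,1/(\tau e)\}$, so that $b_1\tau\le 1/e$ and the Cauchy function $X_1$ of $\dot x+b_1(t)x(h(t))=0$ is again nonnegative with $\int X_1 b_1\le 1$; moving $(b_1-b)x(h)$ to the right-hand side and combining the representation bound $\|x\|\le\|a/b_1\|\,\|\dot x\|+\|(b-b_1)/b_1\|\,\|x\|+C\|f\|$ with the elementary estimate $\|\dot x\|\le(\|b\|\,\|x\|+\|f\|)/(1-\|a\|)$ produces exactly the contraction constant on the left of \rf{5.2}. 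For \rf{5.3}, the excessive delay is absorbed by comparing $h(t)$ with an argument $h_1(t)$ satisfying $t-h_1(t)\le 1/(\|b\|e)$ and writing $x(h(t))-x(h_1(t))=\int_{h(t)}^{h_1(t)}\dot x(s)\,ds$; the length of this interval equals $\bigl(t-h(t)-1/(\|b\|e)\bigr)^+$, so, using the Cauchy function of the $h_1$-equation, the extra term contributes $\|b\|\,\|(t-h(t)-1/(\|b\|e))^+\|\,\|\dot x\|$, and assembling the bounds as in \rf{5.1} gives \rf{5.3}.

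The variation-of-constants representation and the norm bookkeeping are routine. The main obstacles are two. First, one must have available, and correctly apply, the Bohl--Perron theorem \emph{for neutral equations}, which requires boundedness of $\dot x$, not merely of $x$; controlling $\dot x$ through the equation itself is what closes the argument. Second, for \rf{5.2} and \rf{5.3} the delicate point is to split $b$ (respectively the delay argument $h$) so that the truncated problem retains the positivity and the integral bound $\int X_1 b_1\le 1$ of its Cauchy function, while the discarded part is measured exactly by $\|(b-b_1)/b_1\|$ (respectively by $\|(t-h(t)-1/(\|b\|e))^+\|$); this is what makes the resulting operator a contraction under the stated hypotheses.
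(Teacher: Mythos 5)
Proposition~\ref{proposition5.1} is quoted from \cite{BB1} and is not proved anywhere in this paper --- the machinery developed here (Lemma~\ref{lemma3.1}, Theorem~\ref{theorem4.1}) works instead by an exponential substitution and a contraction estimate on $\|y\|_J$, with no positivity or Bohl--Perron arguments --- so the only meaningful comparison is with the cited source, and your proposal is in substance a faithful reconstruction of that source's method: the Bohl--Perron principle for neutral equations (legitimate here because $\|a\|_{[t_0,\infty)}<1$ makes $I-S$ invertible), positivity of the fundamental function of the auxiliary non-neutral equation under the $1/e$-type condition, the integral bound $\int_{t_0}^{t}X(t,s)b(s)\,ds\le 1$, the splitting $b=b_1+(b-b_1)$ for \rf{5.2}, and the truncated delay $h_1(t)=\max\{h(t),\,t-1/(\|b\|_{[t_0,\infty)}e)\}$ for \rf{5.3}, after which the norm bookkeeping reproduces the three constants exactly. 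The only blemish is the parenthetical claim that $\dot x\in L_\infty$ ``whence also $x\in L_\infty$'' --- boundedness of the derivative does not in general imply boundedness of the function --- but this is harmless in your argument, since $\|x\|$ is in fact bounded directly through the variation-of-constants representation before the derivative bound is closed.
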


\begin{proposition}\label{proposition5.2} \cite[Theorem~1(a)]{BB2}
Assume that $0\leq a_0\leq a(t)\leq A_0<1$, $0< b_0\leq b(t)\leq B_0$ and $\displaystyle
\tau B_0+\frac{\sigma A_0 B_0^2 (1-a_0)}{(1-A_0)^2 b_0}<1-A_0$.
Then equation (\ref{4.5}) is uniformly exponentially stable.
\end{proposition}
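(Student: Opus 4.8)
The plan is to derive uniform exponential stability from a Bohl--Perron type principle (in the form used throughout \cite{BB1,BB2}): it suffices to show that for the forced equation
$$\dot{x}(t)-a(t)\dot{x}(g(t))+b(t)x(h(t))=f(t),\quad t\ge t_0,$$
with zero initial data, every input $f\in L_\infty$ produces a solution with $x,\dot x\in L_\infty$, the standing bounds on the coefficients and delays then upgrading $L_\infty$-input/$L_\infty$-output boundedness to the desired exponential estimate. So the whole task reduces to one a priori bound.

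First I would rewrite the equation as a perturbation of the exponentially stable ordinary equation $\dot y=-q(t)y$, where $q:=b/(1-a)$. Adding and subtracting $a(t)\dot x(t)$ and $b(t)x(t)$ and dividing by $1-a(t)>0$ gives $\dot{x}(t)=-q(t)x(t)+\Phi(t)$, where
$$\Phi(t):=\frac{a(t)}{1-a(t)}\big[\dot x(g(t))-\dot x(t)\big]-\frac{b(t)}{1-a(t)}\big[x(h(t))-x(t)\big]+\frac{f(t)}{1-a(t)}.$$
Let $W(t,s)=\exp\!\big(-\int_s^t q\big)$ be the Cauchy function of the reference equation. Since $a_0\le a\le A_0<1$ and $b_0\le b\le B_0$ give $b_0/(1-a_0)\le q(t)\le B_0/(1-A_0)$, I get the two kernel bounds I will use repeatedly, $\int_{t_0}^t W(t,s)q(s)\,ds=1-W(t,t_0)\le 1$ and $\int_{t_0}^t W(t,s)\,ds\le (1-a_0)/b_0$, and with zero initial data the solution is represented as $x(t)=\int_{t_0}^t W(t,s)\Phi(s)\,ds$.

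Next I would extract an a priori bound on the derivative straight from the equation, $|\dot x(t)|\le A_0\|\dot x\|+B_0\|x\|+\|f\|$, hence $\|\dot x\|\le (B_0\|x\|+\|f\|)/(1-A_0)$. Feeding this into the representation and estimating term by term: the delay part is controlled by $\big(\int W q\big)\,\|x(h(\cdot))-x(\cdot)\|\le\tau\|\dot x\|$, producing a coefficient $\tau B_0/(1-A_0)$ in front of $\|x\|$; the neutral part is controlled by $\frac{A_0}{1-A_0}\big(\int W\big)\,\|\dot x(g(\cdot))-\dot x(\cdot)\|$. The decisive estimate is therefore a bound of the form $\|\dot x(g(\cdot))-\dot x(\cdot)\|\le \frac{\sigma B_0^2}{(1-A_0)^2}\|x\|+C_1\|f\|$, whose prefactor, multiplied by $\frac{A_0}{1-A_0}\cdot\frac{1-a_0}{b_0}$, reproduces exactly the term $\frac{\sigma A_0 B_0^2(1-a_0)}{(1-A_0)^3 b_0}$ obtained from the hypothesis after dividing it through by $1-A_0$. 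Collecting everything yields $\|x\|\le K\|x\|+C\|f\|$ with $K=\frac{\tau B_0}{1-A_0}+\frac{\sigma A_0 B_0^2(1-a_0)}{(1-A_0)^3 b_0}$, and the hypothesis is precisely $K<1$; therefore $\|x\|\le C(1-K)^{-1}\|f\|$, then $\|\dot x\|<\infty$, and Bohl--Perron gives uniform exponential stability.

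I expect the neutral increment estimate $\|\dot x(g(\cdot))-\dot x(\cdot)\|\le\frac{\sigma B_0^2}{(1-A_0)^2}\|x\|+C_1\|f\|$ to be the main obstacle. The naive route $\dot x(g(s))-\dot x(s)=-\int_{g(s)}^s \ddot x$ is unavailable, since with merely measurable $a,b,g,h,f$ the function $\dot x$ need not be absolutely continuous and $\ddot x$ need not exist. Instead I would control the increment through the equation itself: writing $\dot x=-qx+\Phi$ and exploiting that the gap $s-g(s)\le\sigma$, combined with $q\le B_0/(1-A_0)$ and the a priori bound on $\|\dot x\|$, supplies the factor $\sigma B_0^2/(1-A_0)^2$. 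The genuinely delicate point is the variation term that this formal computation discards, which cannot be made small pointwise under mere measurability; I would secure it either by a dedicated lemma bounding the $L_\infty$ modulus of $\dot x$ over intervals of length $\le\sigma$ directly from the integral representation, or by first proving the estimate for smooth coefficients and delays and then passing to the limit by density.
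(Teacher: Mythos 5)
First, a point of comparison: the paper itself contains no proof of this statement --- Proposition~\ref{proposition5.2} is quoted verbatim from \cite[Theorem~1(a)]{BB2}, and it does not follow from the machinery of Section~3 either (Corollary~\ref{corollary4.2} gives a $\sigma$-independent condition; Section~4 stresses that neither test implies the other). So your proposal has to stand on its own, and it does not: the Bohl--Perron framework, the rewriting $\dot x=-qx+\Phi$ with $q=b/(1-a)$, the kernel bounds, and the inequality $\|\dot x\|\le(B_0\|x\|+\|f\|)/(1-A_0)$ are all fine, and your bookkeeping does reproduce the hypothesis; but everything then hinges on the ``decisive estimate'' $\|\dot x(g(\cdot))-\dot x(\cdot)\|\le\frac{\sigma B_0^2}{(1-A_0)^2}\|x\|+C_1\|f\|$, which you leave unproven and which cannot be proven non-circularly under the standing assumptions (measurable $a,b,g,h,f$). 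It is the formal surrogate for $|\dot x(t)-\dot x(g(t))|\le\sigma\|\ddot x\|$, $\|\ddot x\|\le\|q\|\|\dot x\|$; but $\dot x$ is merely in $L_\infty$, and the only rigorous identity available --- subtracting the equation at $t$ and at $g(t)$ --- produces the terms $[b(t)-b(g(t))]x(h(t))$, $b(g(t))[x(h(t))-x(h(g(t)))]$ (where $|h(t)-h(g(t))|$ can reach $\sigma+\tau$ for measurable $h$), and $f(t)-f(g(t))$, none of which is $O(\sigma)$. Concretely: take $a\equiv A_0$, $h(t)=t$, let $b$ alternate between $b_0$ and $B_0$ on intervals of length $\epsilon\ll\sigma$, and choose the measurable delay $g(t)=t$ on $\{b=B_0\}$, $g(t)=t-\epsilon$ on $\{b=b_0\}$, so that $b(g(t))\equiv B_0$. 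The difference identity then forces $\esssup_t|\dot x(g(t))-\dot x(t)|$ to be of order $(B_0-b_0)\|x\|$ no matter how small $\sigma$ is, so your estimate can only hold with $C_1$ proportional to $\|x\|/\|f\|$ --- i.e., only by invoking the very bound $\|x\|\le C\|f\|$ the argument is supposed to establish. Both of your fallbacks collapse for the same reason: a lemma controlling the $L_\infty$ modulus of $\dot x$ over gaps of length $\sigma$ is simply false ($\dot x$ has jumps of size $(B_0-b_0)\|x\|$ inherited from jumps of $b$, $h$, $f$), and smoothing plus density does not work because for mollified data the bound on $\|\ddot x\|$ involves $\|\dot a\|,\|\dot b\|,\|\dot h\|,\|\dot f\|$, which blow up as the approximation approaches discontinuous data, so no uniform constant survives the limit.

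The missing idea is to never difference $\dot x$ at all. Since $0\le a_0\le a(t)\le A_0<1$, invert the neutral operator by its Neumann series, i.e.\ substitute the equation into itself: with $g^{[0]}(t)=t$, $g^{[j]}=g\circ g^{[j-1]}$, one gets $\dot x(t)=-\sum_{j\ge0}c_j(t)x(p_j(t))+[(I-S)^{-1}f](t)$, where $c_j(t)=a(t)a(g(t))\cdots a(g^{[j-1]}(t))\,b(g^{[j]}(t))$ and $p_j=h\circ g^{[j]}$, so that $0\le c_j(t)\le A_0^jB_0$ and $t-p_j(t)\le\tau+j\sigma$. (Here the sign condition $a\ge a_0\ge 0$, which your argument never genuinely uses, is essential: it makes every $c_j$ nonnegative and bounds $\sum_{j\ge 1}jc_j/\sum_j c_j$ by $A_0B_0(1-a_0)/(b_0(1-A_0)^2)$.) This is now a \emph{non-neutral} equation with infinitely many delays, so only $x$ --- which is absolutely continuous --- needs to be differenced: $x(p_j(t))=x(t)-\int_{p_j(t)}^t\dot x(\xi)d\xi$. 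Writing $c=\sum_jc_j$, integrating against the kernel $W_c(t,s)=\exp(-\int_s^tc(\xi)d\xi)$, and splitting $\sum_jc_j(s)(s-p_j(s))\le\tau c(s)+\sigma\sum_{j\ge1}jc_j(s)$, the bounds $\int_{t_0}^tW_c(t,s)c(s)ds\le1$ and $\|\dot x\|\le(B_0\|x\|+\|f\|)/(1-A_0)$ yield (up to routine edge effects near $t_0$) the inequality $\|x\|\le\bigl[\frac{\tau B_0}{1-A_0}+\frac{\sigma A_0B_0^2(1-a_0)}{b_0(1-A_0)^3}\bigr]\|x\|+C\|f\|$, and the bracket is $<1$ exactly under the stated hypothesis, after which Bohl--Perron finishes the proof. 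Note this is also the operating principle of the paper's own Lemma~\ref{lemma3.1}: delayed terms are handled by differencing $y$, never $\dot y$, and the neutral term is either kept whole (at the price of a $\sigma$-independent condition, as in Theorem~\ref{theorem4.2}) or eliminated by inversion as above. Your plan inverts that principle, and that is precisely where it breaks.
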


In the present paper, we consider a generalization of (\ref{4.5}) to the case of several delays in the non-neutral part
\begin{equation}\label{1.1}
\dot{x}(t)-a(t)\dot{x}(g(t))=-\sum_{k=1}^m b_k(t)x(h_k(t)).
\end{equation}
We refer the reader to the review of known stability results for neutral equations in \cite{BB1}. 
For a particular case of constant coefficients and delays sharp results are obtained in \cite{Balandin}.
The recent paper \cite{BB3} contains exponential estimates for solutions of delay differential equations without
the neutral term ($a(t)\equiv 0$ in \eqref{1.1}), see also \cite{Khar}.
The purpose of the present paper is two-fold.
\begin{enumerate}
\item
Investigate stability of (\ref{1.1}) and get sufficient conditions which involve all the delays in the non-neutral part.
Among other results, we obtain a new exponential stability test which has a very simple form and can be applied to a wide
class of neutral equations.
\item
Develop explicit exponential estimates for solutions of (\ref{1.1}) and its non-homogeneous version, dependent on the right-hand side and the initial functions. These inequalities are valid on every finite segment, thus both describing asymptotic and transient behavior.
For neutral differential equations, exponential  estimates are obtained here for the first time.
\end{enumerate}

Note that our assumptions refer to a half-line $[t_0,\infty)$, which is essential for exponential stability tests. However, once all the parameters are considered on a finite segment $[t_0,t_1]$, solution estimates on this interval remain valid. 
Only few asymptotic formulas for solutions of neutral differential equations are known, see \cite{Dom,Kordonis,Matveeva}
and references therein, and the present paper fills this gap.

The paper is organized as follows. Section 2 includes some definitions and auxiliary results, among them 
an estimate for the fundamental function of an  equation with
a non-delay term. Section 3 is the main part of the paper
where we obtain stability tests and develop solution estimates for equation \eqref{1.1} and its non-homogeneous version.
Section 4 presents illustrating examples and discussion. 

\section{Preliminaries}

We consider scalar delay differential equation (\ref{1.1})
where, similarly to (\ref{4.5}), $a, b_k, g, h_k$ are Lebesgue measurable, $a$ satisfies (\ref{a_less_1}) and
$b_0\leq b_k(t)\leq B_k$  for $t\geq t_0\geq 0$, implication (\ref{del_neut}) holds and there are $\sigma>0$, $\tau_k>0$ 
such that $0\leq t-g(t)\leq \sigma$, $0 \leq t-h_k(t) \leq \tau_k$ for $t \geq t_0$, $i=1, \dots, m$.
 

Along with  (\ref{1.1}), we consider an initial value problem for a non-homogeneous equation
\begin{equation}
\label{2.1}
\dot{x}(t)-a(t)\dot{x}(g(t))+\sum_{k=1}^m b_k(t)x(h_k(t))=f(t), ~t\geq t_0,~
\end{equation}
\begin{equation}
\label{2.2}
x(t)=\varphi(t), ~ t \leq t_0,~\dot{x}(t)=\psi(t),~ t<t_0, 
\end{equation}
where $f:[t_0,\infty)\rightarrow {\mathbb R}$ is a Lebesgue measurable locally essentially bounded  function,
$\varphi:(-\infty,t_0] \rightarrow {\mathbb R}$ and $\psi :(-\infty,t_0)\rightarrow {\mathbb R}$ 
are Borel measurable bounded functions.

Further, we assume that the above conditions hold for (\ref{1.1}) and (\ref{2.1})-(\ref{2.2}) without mentioning it,
as well as similar conditions for all other neutral equations considered in the paper.

\begin{definition} 
A locally absolutely continuous on $[t_0,\infty)$
function $x: {\mathbb R} \rightarrow {\mathbb R}$ is called {\bf a solution of problem} (\ref{2.1})-(\ref{2.2}) 
if it satisfies equation (\ref{2.1}) for almost all $t\in [t_0,\infty)$ and
the equalities in (\ref{2.2})
for $t\leq t_0$.
%
For each $s\geq t_0$, the solution $X(t,s)$ of the problem
\begin{equation}
\label{2.3}
\dot{x}(t)-a(t)\dot{x}(g(t))+\sum_{k=1}^m b_k(t)x(h_k(t))=0, ~x(t)=0,~\dot{x}(t)=0,~t<s,~x(s)=1
\end{equation}
is called {\bf the fundamental function} of equation  (\ref{1.1}). We assume $X(t,s)=0$ for $0\leq t<s$.
\end{definition}

\begin{definition}
We will say that equation (\ref{1.1}) is {\bf uniformly exponentially stable} 
if there exist 
$M>0$ and $\gamma>0$ such that 
the solution of problem  (\ref{2.1})-(\ref{2.2})  with $f \equiv 0$
has the estimate 
$\displaystyle 
|x(t)|\leq M e^{-\gamma (t-t_0)} \sup_{t \in (-\infty,  t_0]}(|\varphi(t)|+|\psi(t)|)$, $t\geq t_0$,
where $M$ and $\gamma$ do not depend on $t_0 \geq 0$, $\varphi$ and $\psi$.
The fundamental function $X(t,s)$ of equation (\ref{1.1}) {\bf has an exponential estimate} if it satisfies
$\displaystyle |X(t,s)|\leq M_0 e^{-\gamma_0(t-s)}$ for some $t_0\geq 0$, $M_0>0$, $\gamma_0>0$ and $t\geq s\geq t_0$.
\end{definition}
 
For a fixed bounded interval $J=[t_0,t_1]$, consider the space $L_{\infty}[t_0,t_1]$ of all essentially bounded on $J$
functions with the 
norm $\|y\|_J= \esssup_{t\in J} |y(t)|$,
also $\|f\|_{[t_0,\infty)}=\esssup_{t\geq t_0} |f(t)|$, 
$I$ is the identity operator.
Define a linear bounded operator 
on the space $L_{\infty}[t_0,t_1]$ as 
$\displaystyle 
(Sy)(t)=\left\{\begin{array}{ll}
a(t)y(g(t)),& g(t)\geq t_0,\\
0,& g(t)<t_0.\\
\end{array}\right. 
$
Note that there exists a unique
solution of
problem (\ref{2.1})-(\ref{2.1}), see, for example, \cite{AzbSim},
and it 
can be presented as 
\begin{equation}
\label{star1}
\begin{array}{ll}
x(t)  = & \displaystyle X(t,t_0)x_0+\int_{t_0}^t X(t,s)[(I-S)^{-1}f](s)ds 
+  \int_{t_0}^{t_0+\sigma} X(t,s)[(I-S)^{-1}(a(\cdot)\psi(g(\cdot)))](s)ds
\\ & \displaystyle  -\sum_{k=1}^m \int_{t_0}^{t_0+\tau_k} X(t,s)[(I-S)^{-1}(b_k(\cdot)\varphi(h_k(\cdot)))](s)ds,
\end{array}
\end{equation}
where 
 $\psi(g(t))=0$ for $g(t)\geq t_0$,
$\varphi(h_k(t))=0$ for $h_k(t)\geq t_0 $, and in $L_{\infty}[t_0,t_1]$, for any $t_1>t_0$, 
\begin{equation}
\label{star} 
\|(I-S)^{-1}\|\leq \frac{1}{1-\|a\|_{[t_0,\infty)}}.
\end{equation}


Let us start with a uniform estimate
\begin{equation}\label{3.2}
|Y(t,s)|\leq K, ~~t \geq s \geq t_0
\end{equation}
for  the fundamental function $Y(t,s)$ of the  equation 
with a non-delay term
\begin{equation}\label{3.1}
\dot{y}(t)-a_0(t)\dot{y}(g(t))=c(t)y(t) 
- \sum_{k=0}^m d_k(t)y(h_k(t)), ~t \geq t_0,
\end{equation}
where  $0\leq t-g(t)\leq \sigma$, $t- h_k(t)\leq \tau_k$, $k=0, \dots, m$.
Denote 
$\displaystyle d(t):=\sum_{k=0}^m d_k(t)$.

\begin{lemma}\label{lemma3.1}
If  $\|a_0\|_{[t_0,\infty)}<1$,  there is an $\alpha_0 >0$ such that $d(t)-c(t)\geq \alpha_0$ and
\begin{equation}\label{3.3}
K_0:=\left(\frac{\|c\|_{[t_0,\infty)}+\sum_{k=0}^m \|d_k\|_{[t_0,\infty)}}{1-\|a_0\|_{[t_0,\infty)}}\right)
\left(\left\|\frac{a_0}{d-c}\right\|_{[t_0,\infty)}+\sum_{k=0}^m  \tau_k \left\|\frac{d_k}{d-c}\right\|_{[t_0,\infty)}\right)<1
\end{equation}
 then the fundamental function $Y(t,s)$ of (\ref{3.1}) satisfies (\ref{3.2}) with $K= (1-K_0)^{-1}.$\end{lemma}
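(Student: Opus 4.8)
The plan is to compare \eqref{3.1} with the scalar pointwise ODE $\dot{y}(t)+(d(t)-c(t))y(t)=0$ and to treat the neutral term and the ``spread'' of the delayed terms as a perturbation. Adding and subtracting $d(t)y(t)$ and using $d=\sum_{k=0}^m d_k$, I rewrite \eqref{3.1} as
\[
\dot{y}(t)+(d(t)-c(t))y(t)=a_0(t)\dot{y}(g(t))+\sum_{k=0}^m d_k(t)\bigl(y(t)-y(h_k(t))\bigr).
\]
The left-hand side has the Cauchy function $W(t,\xi)=\exp\left(-\int_\xi^t (d(\zeta)-c(\zeta))\,d\zeta\right)$, which by $d-c\ge\alpha_0>0$ satisfies $0<W(t,\xi)\le 1$ together with the identity $\int_s^t W(t,\xi)(d(\xi)-c(\xi))\,d\xi=1-W(t,s)\le 1$; this last identity is what converts the coefficients $a_0$ and $d_k$ into the normalized ratios $a_0/(d-c)$ and $d_k/(d-c)$ appearing in \eqref{3.3}.

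Fix $s\ge t_0$ and a finite $t_1>s$; by existence and uniqueness of solutions on finite intervals, $P:=\|Y(\cdot,s)\|_{[s,t_1]}$ is finite, and it suffices to bound it independently of $t_1$ and $s$. The first step produces the factor $\bigl(\|c\|_{[t_0,\infty)}+\sum_k\|d_k\|_{[t_0,\infty)}\bigr)/(1-\|a_0\|_{[t_0,\infty)})$. Writing \eqref{3.1} in the neutral form $(I-S_0)\dot{Y}=cY-\sum_{k=0}^m d_k Y(h_k)$, where $(S_0 u)(t)=a_0(t)u(g(t))$ on $L_\infty[s,t_1]$ (set to $0$ when $g(t)<s$), the bound \eqref{star} applied to $S_0$ gives $\|(I-S_0)^{-1}\|\le(1-\|a_0\|_{[t_0,\infty)})^{-1}$, hence
\[
\|\dot{Y}(\cdot,s)\|_{[s,t_1]}\le\frac{\|c\|_{[t_0,\infty)}+\sum_{k=0}^m\|d_k\|_{[t_0,\infty)}}{1-\|a_0\|_{[t_0,\infty)}}\,P=:Q\,P.
\]

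The second step produces the factor $\left\|a_0/(d-c)\right\|_{[t_0,\infty)}+\sum_k\tau_k\left\|d_k/(d-c)\right\|_{[t_0,\infty)}=:R$. Applying variation of constants with $W$ to the rewritten equation and using $Y(s,s)=1$ gives
\[
Y(t,s)=W(t,s)+\int_s^t W(t,\xi)\left[a_0(\xi)\dot{Y}(g(\xi),s)+\sum_{k=0}^m d_k(\xi)\bigl(Y(\xi,s)-Y(h_k(\xi),s)\bigr)\right]d\xi.
\]
Then I would estimate $|a_0(\xi)|\le\|a_0/(d-c)\|_{[t_0,\infty)}(d(\xi)-c(\xi))$, bound $|Y(\xi,s)-Y(h_k(\xi),s)|=\bigl|\int_{h_k(\xi)}^{\xi}\dot{Y}(\zeta,s)\,d\zeta\bigr|\le\tau_k\|\dot{Y}(\cdot,s)\|_{[s,t_1]}$, factor out $(d-c)$ likewise from each $d_k$, and use $\int_s^t W(t,\xi)(d-c)\,d\xi\le 1$ together with $W(t,s)\le 1$. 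This yields $P\le 1+R\,\|\dot{Y}\|_{[s,t_1]}$. Combining with the first step, $P\le 1+R Q\,P=1+K_0 P$, and since $K_0<1$ by \eqref{3.3} we get $P\le(1-K_0)^{-1}$ uniformly in $t_1$ and $s$, which is \eqref{3.2} with $K=(1-K_0)^{-1}$.

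The step I expect to be the main obstacle is the estimate $|Y(\xi,s)-Y(h_k(\xi),s)|\le\tau_k\|\dot{Y}\|$ on the initial layer where $h_k(\xi)<s$. There $Y(h_k(\xi),s)=0$, while $Y(\xi,s)$ is close to the jump value $Y(s,s)=1$, so the difference is not simply the integral of $\dot{Y}$: the fundamental function is discontinuous at $t=s$. The careful part of the argument is to verify that this unit jump is already carried by the leading term $W(t,s)$ and that the residual contribution of the set $\{\xi:h_k(\xi)<s\}$ does not spoil the constant $(1-K_0)^{-1}$; handling this initial interval correctly, rather than the routine integral estimates, is where the real work lies.
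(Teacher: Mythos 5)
Your proposal is, step for step, the paper's own proof: the same first step bounding $\|\dot Y\|_J$ through the operator $(I-S_0)^{-1}$ and estimate \eqref{star}, the same second step of variation of constants with the weight $W(t,\xi)=e^{-\int_\xi^t(d-c)}$ and the normalization $\int_s^t W(t,\xi)(d(\xi)-c(\xi))\,d\xi\le 1$, and the same combination $\|Y\|_J\le 1+K_0\|Y\|_J$ with the same constants. So as far as matching the published argument goes, you have reproduced it exactly.

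The point worth making concerns the obstacle you flag in your last paragraph: it is genuine, and the paper's proof does not address it either. The paper simply writes
\[
y(t)-y(h_k(t))=\int_{h_k(t)}^t\dot y(\xi)\,d\xi ,
\]
but on the initial layer $\{t:h_k(t)<t_0\}$ the left-hand side equals $y(t)$ (since $y(h_k(t))=0$ there by the zero initial function), while the right-hand side equals $y(t)-1$ (since $\dot y\equiv 0$ before $t_0$ and $y(t_0)=1$); the rewritten equation that gets integrated therefore differs from the true one by $\sum_k d_k(t)\chi_k(t)$, where $\chi_k$ is the characteristic function of $\{t:h_k(t)<t_0\}$. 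A toy check makes this concrete: for $\dot y(t)=-d_0\,y(t-\tau_0)$ the paper's integrated formula asserts $y(t)=e^{-d_0(t-t_0)}$ on $[t_0,t_0+\tau_0]$, whereas the fundamental function is identically $1$ there. Carrying the correction through variation of constants adds
\[
E(t)=\int_{t_0}^t W(t,s)\sum_{k=0}^m d_k(s)\chi_k(s)\,ds
\]
to the leading term $W(t,t_0)$. Your hope that the unit jump is ``already carried by $W(t,s)$'' is automatic only under sign conditions, e.g.\ $c\le 0$ and $d_k\ge 0$, for then $\sum_k d_k\chi_k\le d-c$ and $0<W(t,t_0)+E(t)\le 1$, so $K=(1-K_0)^{-1}$ survives; but this fails in the paper's own application of the lemma (in Theorem \ref{theorem4.1} one has $c=\lambda>0$, and $d_0$ may be negative). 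In general one only gets $|W(t,t_0)+E(t)|\le\max\bigl\{1,\sum_k\|d_k/(d-c)\|_{[t_0,\infty)}\bigr\}$, hence the bound \eqref{3.2} with the modified constant $K=\max\bigl\{1,\sum_k\|d_k/(d-c)\|_{[t_0,\infty)}\bigr\}(1-K_0)^{-1}$. So the step you predicted to be ``where the real work lies'' is precisely the step the published proof skips; closing it requires either the sign assumptions above or accepting a (slightly) larger constant.
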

\begin{proof}
For brevity of notations, we set $y(t)=Y(t,t_0)$. Then, $y$ satisfies (\ref{3.1}), where the initial value is $y(t_0)=1$, with the zero initial functions.
Let $J=[t_0,t_1]$, where $t_1>t_0$ is arbitrary. Equality (\ref{3.1}) implies the estimate, due to (\ref{star}),
\begin{equation}\label{3.4}
 \|\dot{y}\|_J\leq \left(\frac{\|c\|_{[t_0,\infty)}+\sum_{k=0}^m \|d_k\|_{[t_0,\infty)}}{1-\|a_0\|_{[t_0,\infty)}}\right)\|y\|_J.
\end{equation}
Further, since $d(t)y(t)-\sum_{k=0}^m d_k(t)y(h_k(t))= \sum_{k=0}^m d_k(t)[y(t)-y(h_k(t))]$, from \eqref{3.1},
$$
\dot{y}(t)=-[d(t)-c(t)]y(t)+a_0(t)\dot{y}(g(t))+\sum_{k=0}^m d_k(t)\int_{h_k(t)}^t \dot{y}(\xi)d\xi.
$$
Integrating from $t_0$ to $t$, we get 
\begin{align*}
y(t)= & e^{-\int_{t_0}^t [d(\xi)-c(\xi)]d\xi}+\int_{t_0}^t e^{-\int_s^t [d(\xi)-c(\xi)]d\xi}[d(s)-c(s)]
\times
\\
 & \times\left [\frac{a_0(s)}{d(s)-c(s)}\dot{y}(g(s))+\sum_{k=0}^m \frac{d_k(s)}{d(s)-c(s)}\int_{h_k(s)}^s \dot{y}(\xi)d\xi\right]ds.
\end{align*}
Therefore, by (\ref{3.4}) and the definition of $K_0$ in (\ref{3.3}), we have
$$
\|y\|_J\leq 1+\left(\left\|\frac{a_0}{d-c}\right\|_{[t_0,\infty)}+\sum_{k=0}^m \tau_k\left\|\frac{d_k}{d-c}\right\|_{[t_0,\infty)}\right)\|\dot{y}\|_J
\leq 1+ K_0 \|y\|_J.
$$
Then
$ \|Y(t,t_0)\|_J\leq (1-K_0)^{-1}$, and the expression in 
the right-hand side 
does not depend on $t_1$. 

Hence 
 $ \displaystyle \|Y(t,t_0)\|_{[t_0,\infty)}\leq  (1-K_0)^{-1}$.
Again, the same inequality holds with $t_0$ replaced by any $s \geq t_0$. Thus estimate (\ref{3.2}) holds.
\end{proof}

\section{Main Results}

We start with an exponential estimate which later will be used to analyze exponential stability.

\begin{theorem}\label{theorem4.1}
Assume that there exist constants $\lambda>0$ and $\alpha>0$ such that
\begin{equation}\label{4.1}
p(t):=\sum_{k=1}^m e^{\lambda(t-h_k(t))}b_k(t)+\lambda a(t)e^{\lambda(t-g(t))}-\lambda\geq \alpha, ~ t \geq t_0, ~e^{\lambda\sigma}\|a\|_{[t_0,\infty)}<1,
\end{equation}
\begin{equation}
\label{4.2}
M_1:=
\frac{\lambda+\sum\limits_{k=1}^m e^{\lambda\tau_k}\|b_k\|_{[t_0,\infty)}+\lambda e^{\lambda\sigma}\|a\|_{[t_0,\infty)}}{1-e^{\lambda\sigma}\|a\|_{[t_0,\infty)}} 
\left(\left\|\frac{a}{p}\right\|_{[t_0,\infty)} \!\!\! \!\!\! \!\!\!\! (1+\lambda\sigma)e^{\lambda\sigma}
+\sum_{k=1}^m  \left\|\frac{ b_k}{p}\right\|_{[t_0,\infty)} \!\!\!\!\!\!\!\!\!\! e^{\lambda\tau_k}\tau_k \right)<1.
\end{equation}
Then for the solution of problem ({2.1}),  the following estimate is valid
\begin{equation}
\label{4.3}
\begin{array}{ll} |x(t)|\leq & \displaystyle M_0e^{-\lambda(t-t_0)}\left[|x(t_0)|
+\frac{e^{\lambda \sigma}-1}{\lambda(1-\|a\|_{[t_0,\infty)})} \|a\|_{[t_0,\infty)}\|\psi\|_{[t_0-\sigma,t_0]}\right.
\vspace{2mm}
\\
& \displaystyle \left.+\sum_{k=1}^m\frac{e^{\lambda \tau_k}-1}{\lambda(1-\|a\|_{[t_0,\infty)})} \|b_k\|_{[t_0,\infty)}\|\varphi\|_{[t_0-\tau_k,t_0]}\right]+\frac{M_0}{\lambda(1-\|a\|_{[t_0,\infty)})}\|f\|_{[t_0,t]}, \end{array}
\end{equation}
where $M_0:=(1-M_1)^{-1}$. 
\end{theorem}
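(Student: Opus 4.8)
The plan is to strip off the exponential weight by the substitution $x(t)=e^{-\lambda(t-t_0)}z(t)$, which should turn the neutral equation (\ref{2.1}) into an equation of exactly the form (\ref{3.1}); I then invoke Lemma~\ref{lemma3.1} to bound the fundamental function of that auxiliary equation \emph{uniformly}, undo the substitution to get an exponential bound on the fundamental function $X(t,s)$ of (\ref{1.1}), and finish via the representation (\ref{star1}).

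First I would compute, with $z(t)=e^{\lambda(t-t_0)}x(t)$, the equation satisfied by $z$. Differentiating and multiplying the equation through by $e^{\lambda(t-t_0)}$ gives
\begin{align*}
\dot z(t)-a(t)e^{\lambda(t-g(t))}\dot z(g(t)) = {} & \lambda z(t)-\lambda a(t)e^{\lambda(t-g(t))}z(g(t)) \\
 & {}-\sum_{k=1}^m b_k(t)e^{\lambda(t-h_k(t))}z(h_k(t))+e^{\lambda(t-t_0)}f(t).
\end{align*}
Dropping $f$ (for the fundamental function), this is precisely (\ref{3.1}) with neutral coefficient $a_0=a\,e^{\lambda(\cdot-g)}$, non-delay coefficient $c\equiv\lambda$, and delayed coefficients $d_0=\lambda a\,e^{\lambda(\cdot-g)}$ attached to the argument $h_0:=g$ with $\tau_0:=\sigma$, together with $d_k=b_k e^{\lambda(\cdot-h_k)}$ for $k=1,\dots,m$. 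The decisive observation is that $d(t)-c(t)=\sum_{k=0}^m d_k(t)-\lambda$ equals exactly the function $p(t)$ from (\ref{4.1}), so the sign hypothesis $d-c\ge\alpha$ of Lemma~\ref{lemma3.1} holds with $\alpha_0=\alpha$.

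Next I would verify the smallness condition (\ref{3.3}). Using $0\le t-g(t)\le\sigma$ and $0\le t-h_k(t)\le\tau_k$ one bounds $\|a_0\|\le e^{\lambda\sigma}\|a\|<1$, $\|d_0\|\le\lambda e^{\lambda\sigma}\|a\|$, $\|d_k\|\le e^{\lambda\tau_k}\|b_k\|$, as well as $\|a_0/p\|\le e^{\lambda\sigma}\|a/p\|$ and $\|d_k/p\|\le e^{\lambda\tau_k}\|b_k/p\|$. Substituting these termwise into (\ref{3.3})—noting $\tau_0=\sigma$, so the $k=0$ contribution to the second factor collapses to $(1+\lambda\sigma)e^{\lambda\sigma}\|a/p\|$—shows $K_0\le M_1<1$. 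Lemma~\ref{lemma3.1} then yields $|Z(t,s)|\le(1-K_0)^{-1}\le M_0$ for the fundamental function $Z$ of the auxiliary equation, and since the same substitution based at $s$ gives $Z(t,s)=e^{\lambda(t-s)}X(t,s)$, I obtain $|X(t,s)|\le M_0e^{-\lambda(t-s)}$.

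Finally I would insert this exponential bound together with the operator estimate (\ref{star}) into the representation (\ref{star1}). The term $X(t,t_0)x_0$ gives $M_0e^{-\lambda(t-t_0)}|x(t_0)|$; the $f$-term is bounded by $\tfrac{M_0}{1-\|a\|}\|f\|_{[t_0,t]}\int_{t_0}^t e^{-\lambda(t-s)}\,ds\le\tfrac{M_0}{\lambda(1-\|a\|)}\|f\|_{[t_0,t]}$; and in the $\psi$- and $\varphi$-terms the integrals over $[t_0,t_0+\sigma]$ and $[t_0,t_0+\tau_k]$ produce the factors $\tfrac{e^{\lambda\sigma}-1}{\lambda}$ and $\tfrac{e^{\lambda\tau_k}-1}{\lambda}$ after factoring out $e^{-\lambda(t-t_0)}$, reproducing (\ref{4.3}). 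I expect the one genuinely load-bearing step to be the algebraic matching of the previous paragraph—checking that the weighted coefficients satisfy $d-c=p$ and that the two factors of $K_0$ are dominated by those of $M_1$; everything afterwards is routine integration, with only the minor care that $X(t,s)=0$ for $s>t$ lets the short integrals be estimated as if $t\ge t_0+\sigma$ (resp. $t_0+\tau_k$) uniformly in $t$.
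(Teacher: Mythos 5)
Your proposal is correct and follows essentially the same route as the paper: the substitution $x(t)=e^{-\lambda(t-t_0)}z(t)$, identification of the resulting equation with (\ref{3.1}) so that $d-c=p$, verification that $K_0\le M_1<1$ (with the $k=0$ term yielding the $(1+\lambda\sigma)e^{\lambda\sigma}$ factor), Lemma~\ref{lemma3.1} for the uniform bound on $Z$, back-substitution to get $|X(t,s)|\le M_0e^{-\lambda(t-s)}$, and the representation (\ref{star1}) with (\ref{star}) to produce (\ref{4.3}). Your explicit remark that $X(t,s)=0$ for $s>t$ justifies estimating the short integrals uniformly is a detail the paper leaves implicit, but the argument is the same.
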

\begin{proof}
Consider first the case $f \equiv 0$.
After the substitution $x(t)=e^{-\lambda(t-t_0)}z(t)$ into (\ref{2.1}), we get
\begin{equation}\label{4.4}
\dot{z}(t)-a(t)e^{\lambda(t-g(t))}\dot{z}(g(t))=\lambda z(t)-\lambda e^{\lambda(t-g(t))}a(t)z(g(t))-\sum_{k=1}^m e^{\lambda(t-h_k(t))}b_k(t)z(h_k(t)).
\end{equation}
Equation (\ref{4.4}) has the form of (\ref{3.1}) with 
$$
a_0(t)=a(t)e^{\lambda(t-g(t))}, ~c(t)=\lambda, ~d_0(t)=\lambda a(t)e^{\lambda(t-g(t))},  ~h_0(t)=g(t),
~d_k(t)=e^{\lambda(t-h_k(t))}b_k(t),$$ $k=1,\dots,m$.
Again, $\displaystyle d(t)=\sum_{k=0}^m d_k(t)$, in (\ref{4.1}) we have $p=d-c$.

Then 
$$
\left\|\frac{a_0}{d-c}\right\|_{[t_0,\infty)} \!\!\!\! \leq \left\|\frac{a}{p}\right\|_{[t_0,\infty)}\!\!\!\! e^{\lambda\sigma},
\left\|\frac{d_0}{d-c}\right\|_{[t_0,\infty)}\!\!\!\! \leq \left\|\frac{a}{p}\right\|_{[t_0,\infty)}\!\!\!\!\lambda e^{\lambda\sigma},
\left\|\frac{d_k}{d-c}\right\|_{[t_0,\infty)}\!\!\!\!\leq \left\|\frac{b_k}{p}\right\|_{[t_0,\infty)}\!\!\!\!e^{\lambda\tau_k}, k=1,\dots,m,
$$$$
\frac{\|c\|_{[t_0,\infty)}+\sum_{k=0}^m \|d_k\|_{[t_0,\infty)}}{1-\|a_0\|_{[t_0,\infty)}}\leq
\frac{\lambda+\sum_{k=1}^m e^{\lambda\tau_k}\|b_k\|_{[t_0,\infty)}+\lambda e^{\lambda\sigma}\|a\|_{[t_0,\infty)}}{1-e^{\lambda\sigma}\|a\|_{[t_0,\infty)}}.
$$

Let $Z(t,s)$ be the fundamental function of (\ref{4.4}).
Inequalities (\ref{4.1}) and (\ref{4.2}) imply  (\ref{3.3}).  
By Lemma \ref{lemma3.1}, $|Z(t,s)|\leq M_0$. If $X(t,s)$ is
a fundamental function  of (\ref{2.1}) then for $X(t,s)$  the exponential equality $X(t,s)=e^{-\lambda(t-s)}Z(t,s)$ is valid.
Hence $|X(t,s)| \leq M_0 e^{-\lambda(t-s)}$. 

By (\ref{star1}), the solution $x$ of (\ref{2.1})-(\ref{2.2}) satisfies
\begin{align*}
 |x(t)|\leq & |X(t,t_0)| \, |x_0|
+  \int_{t_0}^{t_0+\sigma} |X(t,s)|\, \|(I-S)^{-1}\| \, |a(s)| \, |\psi(g(s))|ds
\\
& +\sum_{k=1}^m \int_{t_0}^{t_0+\tau_k} |X(t,s)| \, \|(I-S)^{-1}\| \, |b_k(s)| \, |\varphi(h_k(s))| \, ds
\\
\leq  & M_0 e^{-\lambda(t-t_0)} |x(t_0)|+\frac{M_0}{\lambda\left(1- \|a\|_{[t_0,\infty)}\right)} 
 \|a\|_{[t_0,\infty)} \left( e^{-\lambda (t-t_0-\sigma)}-e^{-\lambda(t-t_0)} \right)\|\psi\|_{[t_0-\sigma,t_0]}
\\
& +\frac{M_0}{\lambda\left(1-\|a\|_{[t_0,\infty)}\right)} 
\sum_{k=1}^m \|b_k\|_{[t_0,\infty)} \left( e^{-\lambda (t-t_0-\tau_k)}
-e^{-\lambda(t-t_0)} \right)\|\varphi\|_{[t_0-\tau_k,t_0]},
\end{align*}
which implies  (\ref{4.3}) with $f \equiv 0$.

For the general case we apply (\ref{star1}), the estimate for $X(t,s)$ and the inequalities
$$
\left| \int_{t_0}^t X(t,s) (I-S)^{-1}(f(s))\, ds \right| 
\leq \frac{M_0}{\lambda\left(1- \|a\|_{[t_0,\infty)}\right)} \|f\|_{[t_0,t]}.
$$

\end{proof}
From continuity of $p$ in $\lambda$, where $p$ is defined in (\ref{4.1}),
Theorem \ref{theorem4.1} immediately implies the following exponential stability test.

\begin{theorem}\label{theorem4.2}
Assume that for some $\alpha>0$, $\displaystyle b(t):=\sum_{k=1}^m b_k(t)\geq \alpha,~\|a\|_{[t_0,\infty)}<1$ and
$$
\left(\frac{\sum_{k=1}^m \|b_k\|_{[t_0,\infty)}}{1-\|a\|_{[t_0,\infty)}} \right)
\left(\left\|\frac{a}{b}\right\|_{[t_0,\infty)}+\sum_{k=1}^m \tau_k\left\|\frac{b_k}{b}\right\|_{[t_0,\infty)}\right)<1.
$$
Then equation (\ref{1.1}) is uniformly exponentially stable.
\end{theorem}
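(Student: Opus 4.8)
The plan is to obtain Theorem \ref{theorem4.2} from Theorem \ref{theorem4.1} by a continuity argument in the exponent $\lambda$. The starting observation is that conditions (\ref{4.1})--(\ref{4.2}) reduce at $\lambda=0$ to exactly the hypotheses of Theorem \ref{theorem4.2}. Indeed, putting $\lambda=0$ in the definition of $p$ gives $p(t)=\sum_{k=1}^m b_k(t)=b(t)$, so the requirement $p(t)\geq\alpha$ becomes $b(t)\geq\alpha$ and $e^{\lambda\sigma}\|a\|_{[t_0,\infty)}<1$ becomes $\|a\|_{[t_0,\infty)}<1$; likewise, at $\lambda=0$ the prefactor $(1+\lambda\sigma)e^{\lambda\sigma}$ and each $e^{\lambda\tau_k}$ equal $1$, while the summand $\lambda e^{\lambda\sigma}\|a\|_{[t_0,\infty)}$ vanishes, so that $M_1$ collapses precisely to the left-hand side of the strict inequality assumed in Theorem \ref{theorem4.2}. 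Thus the present hypotheses are exactly (\ref{4.1})--(\ref{4.2}) evaluated at $\lambda=0$.

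Next I would establish that $p(t)$, and hence $M_1=M_1(\lambda)$, depends continuously on $\lambda$, with the limit as $\lambda\to0^+$ attained uniformly in $t$. This is where the structural bounds enter: since $0\leq t-h_k(t)\leq\tau_k$ and $0\leq t-g(t)\leq\sigma$ with finite $\tau_k,\sigma$, and since $a$ and the $b_k$ are bounded, each difference $(e^{\lambda(t-h_k(t))}-1)b_k(t)$ and the term $\lambda a(t)e^{\lambda(t-g(t))}-\lambda$ tends to $0$ uniformly in $t$ as $\lambda\to0^+$. Consequently $\|p-b\|_{[t_0,\infty)}\to0$, so $p$ remains bounded away from zero for small $\lambda$, the ratios $a/p$ and $b_k/p$ stay bounded, and all the sup-norms together with their exponential prefactors converge to their $\lambda=0$ values. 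Therefore $M_1(\lambda)\to M_1(0)<1$.

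With continuity in hand I would fix a single $\lambda>0$ small enough that simultaneously $p(t)\geq\alpha/2>0$ for all $t\geq t_0$ and $M_1(\lambda)<1$; both are possible by the uniform convergence just described. For such $\lambda$ Theorem \ref{theorem4.1} applies and furnishes estimate (\ref{4.3}). Setting $f\equiv0$, using $x(t_0)=\varphi(t_0)$, and bounding the coefficients in front of $\|\psi\|_{[t_0-\sigma,t_0]}$ and $\|\varphi\|_{[t_0-\tau_k,t_0]}$ (which depend only on $\lambda$, $\sigma$, $\tau_k$ and the uniform bounds on $a,b_k$) by a single constant, inequality (\ref{4.3}) takes the form $|x(t)|\leq Me^{-\lambda(t-t_0)}\sup_{(-\infty,t_0]}(|\varphi|+|\psi|)$, which is precisely the decay required in the definition of uniform exponential stability, with $\gamma=\lambda$.

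The step I expect to carry the actual content, rather than being a routine limit, is the uniformity of the constants in the initial time. The definition requires $M$ and $\gamma$ to be independent of $t_0$, whereas the norms in (\ref{4.1})--(\ref{4.2}) are taken over $[t_0,\infty)$. The key is that passing from $[t_0,\infty)$ to $[s,\infty)$ with $s\geq t_0$ only decreases every sup-norm appearing in $p$ and $M_1$, so the chosen $\lambda$ keeps both $p\geq\alpha/2$ and $M_1<1$ valid on $[s,\infty)$; this is exactly the restart mechanism already used at the end of the proof of Lemma \ref{lemma3.1} (``the same inequality holds with $t_0$ replaced by any $s\geq t_0$''). Hence the bound $|X(t,s)|\leq M_0e^{-\lambda(t-s)}$ and the resulting constant $M$ are uniform, upgrading the exponential estimate of Theorem \ref{theorem4.1} to \emph{uniform} exponential stability.
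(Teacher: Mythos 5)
Your proposal is correct and follows exactly the paper's route: the authors derive Theorem \ref{theorem4.2} from Theorem \ref{theorem4.1} by the one-line observation that $p$ (and hence $M_1$) is continuous in $\lambda$, with the hypotheses of Theorem \ref{theorem4.2} being precisely conditions (\ref{4.1})--(\ref{4.2}) at $\lambda=0$. Your write-up simply makes explicit the details the paper leaves implicit (uniform convergence of $p$ to $b$ as $\lambda\to 0^+$, and uniformity of the constants in the initial time via monotonicity of the sup-norms), so there is nothing methodologically different here.
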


\begin{corollary}\label{corollary4.1}
Assume that $\displaystyle  b_k>0$ are  constants, 
$\displaystyle
\|a\|_{[t_0,\infty)}<\frac{1}{2}$ and $\displaystyle\sum_{k=1}^m b_k\tau_k<1-2\|a\|_{[t_0,\infty)}.
$
Then equation (\ref{1.1}) is uniformly exponentially stable.
\end{corollary}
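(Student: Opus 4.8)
The plan is to verify that the hypotheses of the corollary force the single inequality required in Theorem \ref{theorem4.2}, after which the conclusion is immediate. First I would record the simplifications coming from the assumption that the $b_k$ are positive constants. Then $b=\sum_{k=1}^m b_k$ is itself a positive constant, so the lower bound $b(t)\geq\alpha$ of Theorem \ref{theorem4.2} holds with $\alpha=b>0$, and moreover $\|b_k\|_{[t_0,\infty)}=b_k$, $\|b\|_{[t_0,\infty)}=b$, and $\|b_k/b\|_{[t_0,\infty)}=b_k/b$. Since $b$ is a constant we also have $\|a/b\|_{[t_0,\infty)}=\|a\|_{[t_0,\infty)}/b$, which is the key cancellation that will be exploited.

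Second, I would substitute these identities into the product appearing in Theorem \ref{theorem4.2}. Using $\sum_{k=1}^m\|b_k\|_{[t_0,\infty)}=b$ in the first factor and pulling the constant $b$ through the second factor, the left-hand side collapses to
$$
\frac{1}{1-\|a\|_{[t_0,\infty)}}\left(\|a\|_{[t_0,\infty)}+\sum_{k=1}^m \tau_k b_k\right),
$$
because $b\,\|a/b\|_{[t_0,\infty)}=\|a\|_{[t_0,\infty)}$ and $b\sum_{k=1}^m\tau_k(b_k/b)=\sum_{k=1}^m\tau_k b_k$.

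Third, I would show that this quantity is strictly less than $1$. This is equivalent to $\|a\|_{[t_0,\infty)}+\sum_{k=1}^m\tau_k b_k<1-\|a\|_{[t_0,\infty)}$, that is, to $\sum_{k=1}^m\tau_k b_k<1-2\|a\|_{[t_0,\infty)}$, which is precisely the standing assumption of the corollary. The hypothesis $\|a\|_{[t_0,\infty)}<\tfrac12$ guarantees both that $1-\|a\|_{[t_0,\infty)}>0$ (so that the division above is legitimate and in particular $\|a\|_{[t_0,\infty)}<1$) and that the right-hand side $1-2\|a\|_{[t_0,\infty)}$ is positive (so that the assumed inequality is not vacuous). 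With all three hypotheses of Theorem \ref{theorem4.2} verified, uniform exponential stability of \eqref{1.1} follows at once.

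I expect no substantive obstacle here: the argument is a direct specialization of Theorem \ref{theorem4.2}. The only point demanding any care is the bookkeeping that lets the constant $b$ cancel the hidden $1/b$ factors inside $\|a/b\|_{[t_0,\infty)}$ and $\|b_k/b\|_{[t_0,\infty)}$, which works precisely because constancy of the $b_k$ makes $b$ constant; were the $b_k$ merely bounded, the supremum norms would not factor so cleanly and a cruder estimate would be needed.
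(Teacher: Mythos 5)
Your proposal is correct and is precisely the intended derivation: the paper states this result as an immediate corollary of Theorem \ref{theorem4.2} without writing out the specialization, and your computation — using constancy of the $b_k$ to get $\|a/b\|_{[t_0,\infty)}=\|a\|_{[t_0,\infty)}/b$ and $\|b_k/b\|_{[t_0,\infty)}=b_k/b$, so that the product collapses to $\bigl(\|a\|_{[t_0,\infty)}+\sum_{k=1}^m \tau_k b_k\bigr)/(1-\|a\|_{[t_0,\infty)})<1$ — is exactly the algebra that reduces the hypothesis of Theorem \ref{theorem4.2} to the assumed inequality $\sum_{k=1}^m b_k\tau_k<1-2\|a\|_{[t_0,\infty)}$.
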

Consider (\ref{4.5}) which is equation (\ref{1.1}) for $m=1$.

\begin{corollary}\label{corollary4.2}
Assume that for some $\alpha>0$, $b(t)\geq \alpha$ for any $t \geq t_0$, $\|a\|_{[t_0,\infty)}<1$, and
$$
\left(\left\|\frac{a}{b}\right\|_{[t_0,\infty)}+\tau\right)\|b\|_{[t_0,\infty)}<1-\|a\|_{[t_0,\infty)}.
$$
Then equation (\ref{4.5}) is uniformly exponentially stable.
\end{corollary}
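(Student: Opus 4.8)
The plan is to observe that Corollary \ref{corollary4.2} is precisely the single-delay case of Theorem \ref{theorem4.2}, so the entire argument reduces to setting $m=1$ in that theorem and simplifying the resulting hypothesis. Equation (\ref{4.5}) is equation (\ref{1.1}) with $m=1$, where the coefficient $b$ and the delay $h$ play the roles of $b_1$ and $h_1$, and $\tau$ plays the role of $\tau_1$. Under this identification, the quantity $b(t):=\sum_{k=1}^m b_k(t)$ appearing in Theorem \ref{theorem4.2} coincides with the single coefficient $b(t)$ of (\ref{4.5}), so the standing assumptions $b(t)\geq\alpha$ and $\|a\|_{[t_0,\infty)}<1$ transfer verbatim.

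First I would rewrite the two summation factors in the hypothesis of Theorem \ref{theorem4.2} for $m=1$. The factor $\sum_{k=1}^m \|b_k\|_{[t_0,\infty)}$ becomes simply $\|b\|_{[t_0,\infty)}$. The essential simplification is the second sum: since $b=b_1$ we have $\|b_1/b\|_{[t_0,\infty)}=\|b/b\|_{[t_0,\infty)}=1$, whence $\sum_{k=1}^m \tau_k\|b_k/b\|_{[t_0,\infty)}=\tau$. Substituting these into the condition of Theorem \ref{theorem4.2} yields
$$
\left(\frac{\|b\|_{[t_0,\infty)}}{1-\|a\|_{[t_0,\infty)}}\right)\left(\left\|\frac{a}{b}\right\|_{[t_0,\infty)}+\tau\right)<1 ,
$$
and multiplying through by the positive factor $1-\|a\|_{[t_0,\infty)}$ gives exactly
$$
\left(\left\|\frac{a}{b}\right\|_{[t_0,\infty)}+\tau\right)\|b\|_{[t_0,\infty)}<1-\|a\|_{[t_0,\infty)} ,
$$
which is the hypothesis of Corollary \ref{corollary4.2}. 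Theorem \ref{theorem4.2} then delivers uniform exponential stability of (\ref{4.5}).

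Because every step is a direct specialization, I do not anticipate any genuine obstacle. The only point requiring a moment's care is the normalization $\|b/b\|_{[t_0,\infty)}=1$, which is what collapses the weighted delay sum to the single constant $\tau$ and removes the coefficient ratios that make the general statement look more involved. All remaining hypotheses---the positivity $b\geq\alpha$, the bound $\|a\|_{[t_0,\infty)}<1$, and the measurability and delay-boundedness assumptions---are inherited unchanged from the setting of Theorem \ref{theorem4.2}, so no further verification is needed.
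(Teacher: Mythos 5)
Your proposal is correct and follows exactly the route the paper intends: Corollary \ref{corollary4.2} is stated immediately after Theorem \ref{theorem4.2} as its $m=1$ specialization, and your reduction (using $\|b/b\|_{[t_0,\infty)}=1$, which is legitimate since $b(t)\geq\alpha>0$, and then clearing the positive factor $1-\|a\|_{[t_0,\infty)}$) is precisely that specialization. No gaps.
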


\section{Examples and Discussion}

To compare known exponential stability results obtained in \cite{BB1,BB2} 
and the test obtained in Theorem \ref{theorem4.2},
consider equation (\ref{4.5}).

Let us illustrate our results with an example where Theorem~\ref{theorem4.2} implies exponential stability, while Propositions \ref{proposition5.1} and \ref{proposition5.2} fail. 
If anyone  of assumptions (\ref{5.1})-(\ref{5.3}) holds, conditions of Corollary~\ref{corollary4.2} also hold. 
However it is possible (see Example~\ref{example5.1}) that all conditions of Proposition~\ref{proposition5.1} fail, but conditions of Corollary \ref{corollary4.2} hold.
The paper \cite{BB1} also studies equation (\ref{1.1}) with several delay terms. Exponential stability tests in \cite{BB1} depend on the
greatest delay $\tau=\max\{\tau_k\}$ with the assumption that $b_k(t)\geq 0$. Theorem \ref{theorem4.1} depends on all delays $\tau_k$ without any assumption on the sign of $b_k(t)$.

Conditions in Proposition \ref{proposition5.2}, unlike in Corollary \ref{corollary4.2}, depend on the delay of a neutral term.
So for small $\sigma$ Proposition \ref{proposition5.2} is better than Corollary \ref{corollary4.2}, but for large $\sigma$, Corollary \ref{corollary4.2} is better (see again Example \ref{example5.1} below).

\begin{example}\label{example5.1}
Consider the equation
\begin{equation}\label{5.4}
\dot{x}(t)-0.15\dot{x}(t-\sigma)=-x \left( t-\frac{1}{e}-0.1 \sin{t} \right),~~\sigma>0.
\end{equation}
Here $t-h(t)=\frac{1}{e}+0.1\sin t\leq \tau:=\frac{1}{e}+0.1>\frac{1}{e}$. 
Conditions (\ref{5.1})-(\ref{5.3}) of Proposition~\ref{proposition5.1} fail, but conditions of Corollary \ref{corollary4.1} ($m=1$) hold. 
Hence by Corollary \ref{corollary4.1} equation (\ref{5.4}) is exponentially stable.
Assumptions of Proposition~\ref{proposition5.2} are satisfied for equation (\ref{5.4}) if $\sigma<2.165$. Thus, for $\sigma>2.2$, say, $\sigma=3$, the results of Corollary \ref{corollary4.2} lead to exponential stability of (\ref{5.4})  while Propositions~\ref{proposition5.1} and \ref{proposition5.2} cannot establish this result.  
\end{example}

We omit here comparison with other known stability results since \cite{BB1} contains this part.
Most of these results are for autonomous equations, or equations with a non-delay term, see for example \cite{Gop1, Gop2, KM}.
As we mentioned earlier, we are not aware of exponential estimates for solutions for neutral differential equations.
Exponential estimates for solutions of delay differential equations without a neutral term can be found in recent paper \cite{BB3}.
The present paper partially generalizes the results obtained in \cite{BB3} to the neutral case.

Let us illustrate exponential estimates for a solution of 
equation (\ref{5.4}) with either constant or variable $\sigma$.

\begin{example}\label{example5.2}
Consider the initial value problem
\begin{equation}\label{5.5}
\begin{array}{l}
\displaystyle
\dot{x}(t)-0.15\dot{x}(t-0.5)=-x \left( t-\frac{1}{e}-0.1\sin{t} \right),~ t\geq 0,\\
x(t)=\cos t, \dot{x}(t)=\sin {2t}+2, ~t<0, x(0)=1.
\end{array} 
\end{equation}
We apply Theorem \ref{theorem4.1}, where $m=1$, $a(t)=0.15$, $b(t)=1$, $\sigma=0.5$, $\displaystyle \frac{1}{e}-0.1\approx 
0.2679\leq t-h(t)\leq \tau=\frac{1}{e}+0.1\approx 0.4679$.
Thus inequality (\ref{4.1}) holds for $\lambda = 0.1$ with $M_1 \approx 0.96, M_0 \leq 25.61$. 
Hence for the fundamental function of the equation in (\ref{5.5})
we have an estimate $|X(t,s)|\leq 25.61 e^{-0.1(t-s)}$, $t \geq s\geq 0$.
Next, we have for the initial conditions
$\psi(t)=\sin {2t}+2$, $\varphi(t)=\cos t$, 
$\|\psi\|=3$, $\|\varphi\|=1$.
By  (\ref{4.3}), for the solution of problem (\ref{5.5}) we have the following estimate 
\begin{equation}\label{5.5a}
|x(t)|\leq 42.4 e^{-0.1t},
\end{equation}
see the comparison to the numerical solution in Fig.~\ref{figure1}, left.
Next, consider variable $\sigma$
\begin{equation}\label{5.6}
\begin{array}{l}
\displaystyle
\dot{x}(t)-0.15\dot{x}(t-2.7-0.3 \cos t)=-x \left( t-\frac{1}{e}-0.1\sin{t} \right)
,~ t\geq 0,\\
x(t)=\cos t, \dot{x}(t)=\sin {2t}+2, ~t<0, x(0)=1.
\end{array}
\end{equation}
All the parameters are as above, only instead of $\sigma=0.5$ we have $2.4 \leq t-\sigma(t) \leq 3$.
Inequality (\ref{4.1}) holds for $\lambda = 0.06$ with $M_0 \leq 25.5$. In this 
case, 
\begin{equation}\label{5.6a}
|x(t)|\leq 54.5 e^{-0.06 t},
\end{equation}
see the comparison to the numerical solution in Fig.~\ref{figure1}, right.

\begin{figure}[ht]
\centering
\includegraphics[scale=0.36]{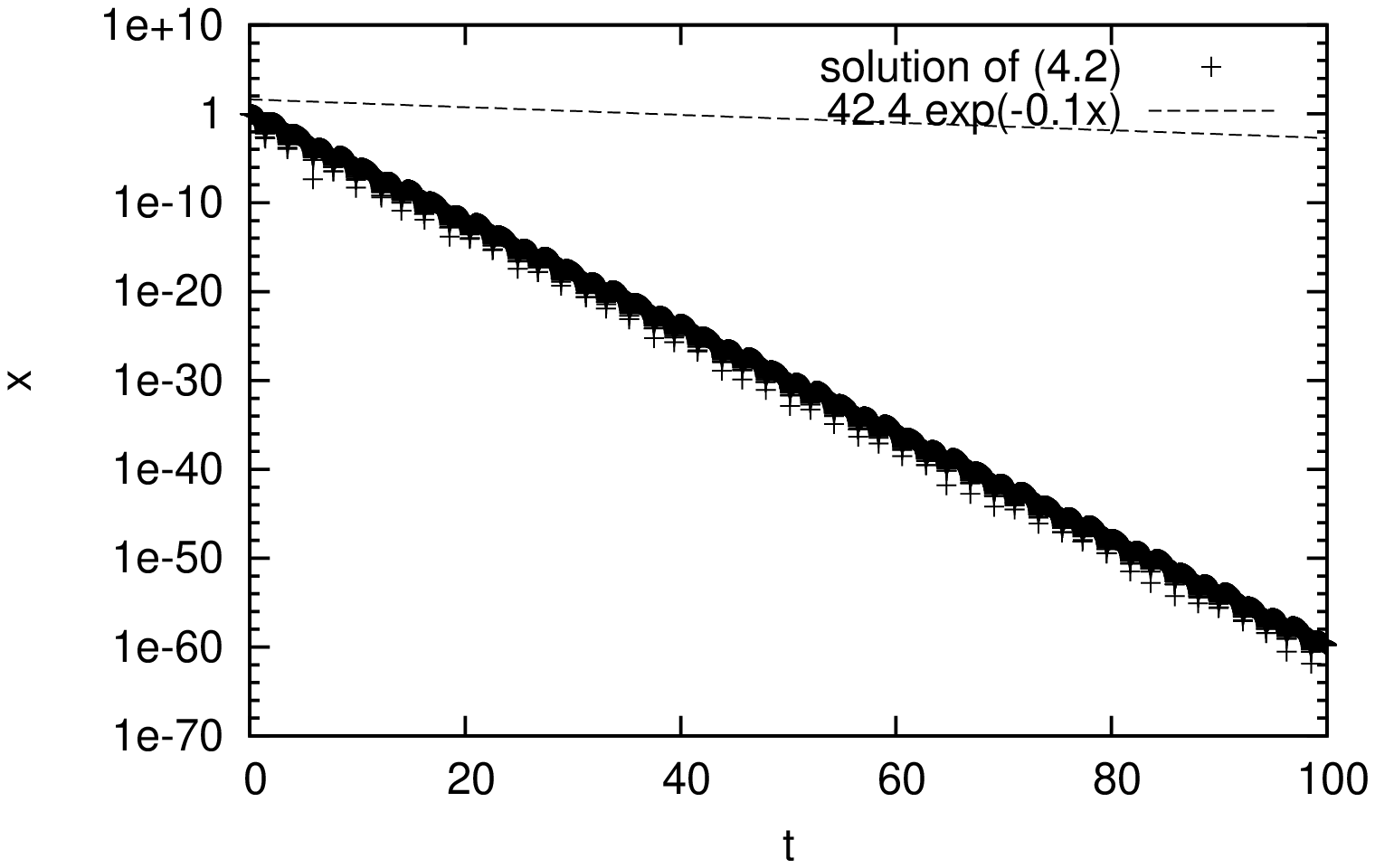}~~~~~~~~~~~~
\includegraphics[scale=0.36]{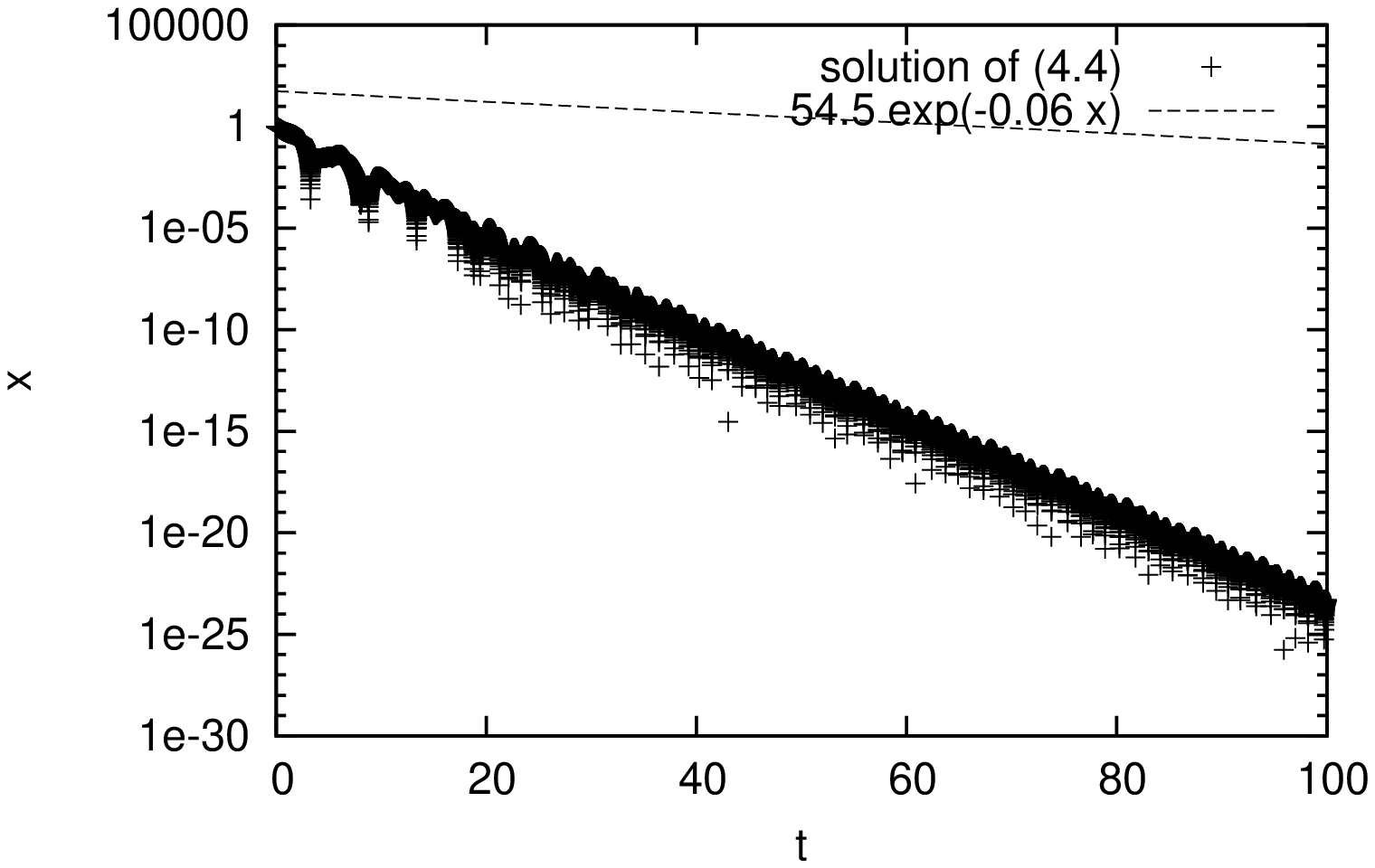}
\caption{The absolute value of the numerical solution of (\protect{\ref{5.5}}) compared to estimate 
(\protect{\ref{5.5a}}) (left) and the numerical solution of (\protect{\ref{5.6}}) compared to estimate 
(\protect{\ref{5.6a}}) (right), with the logarithmic scale in $x$.}
\label{figure1}
\end{figure}
\end{example}

Finally, let us state possible directions for further research extending the results of the present paper.

\begin{enumerate}
\item
Obtain explicit estimates of solutions for nonlinear neutral differential equations.
\item
Extend the estimates of solutions to a vector DDE,  or to higher order neutral equations, for recent results on third order neutral equations see \cite{Dom2019}. Consider other types of neutral equations, such as equations with a distributed delay, and stochastic differential equations.
\item
In this paper, we presented pointwise estimates. It would be interesting to obtain estimates in an integral form.
\item
In Corollary \ref{corollary4.1} the right-hand side is equal to $1-2\|a\|$ implying $\|a\|<\frac{1}{2}$. Can we extend the results to $\|a\| \in (0.5,1)$?
\item
Derive exponential estimates dependent on the neutral delay $\sigma=\esssup_{t\geq t_0} (t-g(t))$ for problem \eqref{2.1}-\eqref{2.2}.
\end{enumerate}

\section*{Acknowledgment}

The second author acknowledges the support of NSERC, the grant RGPIN-2020-03934. 

\end{document}